\documentclass[12pt]{article}
\usepackage{amsmath,amssymb,color}
\usepackage{xspace}

\newcommand{\Var}{{\rm{Var}_{\mathbb{C}}}}

\newcommand{\Sym}{{\operatorname{Sym}}}
\newcommand{\diag}{{\mbox{\rm diag}}}

\def\uu{{\underline{u}}}

\def\tt{{\underline{t}}}

\def\kk{{\underline{k}}}

\def\1{\underline{1}}

\def\AA{{\mathbb A}}

\def\LLL{{\mathbb L}}
\def\Z{{\mathbb Z}}
\def\Q{{\mathbb Q}}
\def\C{{\mathbb C}}

\def\phiphi{{\underline{\varphi}}}
\def\rr{{\underline{r}}}

\newtheorem{theorem}{Theorem}

\newtheorem{lemma}{Lemma}
\newtheorem{proposition}{Proposition}

\newenvironment{definition}
{\smallskip\noindent{\bf Definition\/}:}{\smallskip\par}

\newenvironment{remark}
{\smallskip\noindent{\bf Remark\/}.}{\smallskip\par}

\newenvironment{proof}
{\noindent{\bf Proof\/}.}{{ $\square$}\smallskip\par}

\title{Higher order generalized Euler characteristics and generating series
\footnote{Math. Subject Class.: 32M99, 32Q55, 55M35. Keywords: complex quasi-projective varieties,
finite group actions, orbifold Euler characteristic, wreath products, generating series.}
}

\author{S.M.~Gusein-Zade \thanks{Partially supported by
the Russian government grant 11.G34.31.0005,
RFBR--13-01-00755,
NSh--4850.2012.1 and Simons-IUM fellowship.
Address: Moscow State University, Faculty
of Mathematics and Mechanics, GSP-1, Moscow, 119991, Russia. E-mail:
sabir\symbol{'100}mccme.ru} \and I.~Luengo \thanks{The last two authors are partially
supported by the grant MTM2010-21740-C02-01. Address: University
Complutense de Madrid, Dept. of Algebra, Madrid, 28040, Spain.
E-mail: iluengo\symbol{'100}mat.ucm.es} \and
A.~Melle--Hern\'andez \thanks{Address: 
ICMAT (CSIC-UAM-UC3M-UCM). Dept.\ of Algebra, 
Facultad de Ciencias Matem\'aticas, Universidad Complutense de Madrid, 
28040, Madrid, Spain.
E-mail: amelle\symbol{'100}mat.ucm.es}}
\date{}
\begin{document}
\def\eps{\varepsilon}

\maketitle

\begin{abstract}
 For a complex quasi-projective manifold with a finite group action, 
we define higher order generalized Euler characteristics 
with values in the Grothendieck ring of complex quasi-projective 
varieties extended by the rational powers of the class of the affine line.
We compute the generating series of generalized Euler characteristics 
of a fixed order of the Cartesian products of the manifold with the 
wreath product actions on them.
\end{abstract}

Let $X$ be a topological space (good enough, say, a quasi-projective variety) 
with an action of a finite group $G$.
For a subgroup $H$ of $G$, let $X^H=\{x\in X: Hx=x\}$ be the fixed point set of $H$. The orbifold Euler characteristic
$\chi^{orb}(X,G)$ of the $G$-space $X$ is defined, e.g., in \cite{AS}, \cite{HH}:
\begin{equation}\label{chi-orb}
 \chi^{orb}(X,G)=
\frac{1}{\vert G\vert}\sum_{{(g_0,g_1)\in G\times G:}\atop{\\g_0g_1=g_1g_0}}\chi(X^{\langle g_0,g_1\rangle})
=\sum_{[g]\in {G_*}} \chi(X^{\langle g\rangle}/C_G(g))\,,
\end{equation}
where $G_*$ is the set of conjugacy classes of elements of $G$, $C_G(g)=\{h\in G: h^{-1}gh=g\}$
is the centralizer of $g$, and $\langle g\rangle$ and $\langle g_0,g_1\rangle$ are the subgroups generated by the corresponding elements.

The higher order Euler characteristics of $(X,G)$ 
(alongside with some other generalizations) were defined
in \cite{BF}, \cite{T}.

\begin{definition}
 The {\em Euler characteristic} $\chi^{(k)}(X,G)$ {\em of order} $k$ of the $G$-space $X$ is
\begin{equation}\label{chi-k-orb}
 \chi^{(k)}(X,G)=
\frac{1}{\vert G\vert}\sum_{{{\bf g}\in G^{k+1}:}\atop{g_ig_j=g_jg_i}}\chi(X^{\langle {\bf g}\rangle})
=\sum_{[g]\in G_*} \chi^{(k-1)}(X^{\langle g\rangle}, C_G(g))\,,
\end{equation}
where ${\bf g}=(g_0,g_1, \ldots, g_k)$, $\langle{\bf g}\rangle$ is the subgroup 
generated by $g_0,g_1, \ldots, g_k$, and
$\chi^{(0)}(X,G)$ is defined as $\chi(X/G)$.
\end{definition}

The usual orbifold Euler characteristic $\chi^{orb}(X,G)$
 is the Euler characteristic of order $1$, $\chi^{(1)}(X,G)$.

 The higher order generalized Euler characteristics  takes values 
in the Grothendieck ring of complex quasi-projective 
varieties extended by the rational powers of the class of the affine line.
Let $K_0(\Var)$ be the Grothendieck ring of complex quasi-projective varieties. This is the abelian group
generated by the isomorphism classes $[X]$ of quasi-projective varieties modulo the relation:\newline
--- if $Y$ is a Zariski closed subvariety of $X$, then $[X]=[Y]+[X\setminus Y]$.\newline
The multiplication in $K_0(\Var)$ is defined by the Cartesian product. The class $[X]$ of a variety $X$
is the universal additive invariant of quasi-projective varieties and
can be regarded as a generalized Euler characteristic of $X$. Let $\LLL$ be the class $[\AA_{\C}^1]$
of the affine line and let $K_0(\Var)[\LLL^{1/m}]$ be the extension of the Grothendieck ring $K_0(\Var)$ by
all the rational powers of $\LLL$.

The formula for the generating series of the generalized orbifold Euler characteristics of the pairs $(X^n,G_n)$
in \cite{GLMSteklov} uses the (natural) power structure over the Grothendieck ring $K_0(\Var)$ (and over
$K_0(\Var)[\LLL^{1/m}]$) defined in \cite{GLM-MRL}. (See also \cite{GLM-Michigan}and 
\cite{GLMSteklov} for some generalizations of this
concept.) This means that for a power series 
$A(T)\in 1+t\cdot R[[t]]$ ($R=K_0(\Var)$ or $K_0(\Var)[\LLL^{1/m}]$)
and for an element $m\in R$ there is defined a series 
$\left(A(T)\right)^{m}\in 1+t\cdot R[[t]]$
so that all the properties of the exponential function hold. 
For a  quasi-projective variety $M$, the series $(1-t)^{-[M]}$
is the 
Kapranov zeta-function of $M$:
$\zeta_{[M]}(t):=(1-t)^{-[M]}=1+[M]\cdot t+[\Sym^2 M]\cdot t^2+ [\Sym^3M]\cdot t^3+\ldots,$
where $\Sym^k M=M^k/S_k$ is the $k$-th symmetric power of the variety $M$. 
A geometric description of the power structure over the 
over the Grothendieck
ring $K_0(\Var)$ is given in \cite{GLM-MRL} or \cite{GLMSteklov}.
The (natural) power structures over $K_0(\Var)$ and over $K_0(\Var)[\LLL^{1/m}]$ 
possess the following properties:
\begin{enumerate}
\item[1)] $\left(A(t^s)\right)^m =
\left(A(t)\right)^m\raisebox{-0.5ex}{$\vert$}{}_{t\mapsto t^s}$\;;
\item[2)] $\left(A(\LLL^s t)\right)^m=\left(A(t)\right)^{\LLL^sm}$\;.
\end{enumerate}

One can define a power structure over the ring $\Z[u_1,\ldots,u_r]$ of
polynomials in $r$ variables with integer coefficients in the following
way. Let $P(u_1,\ldots,u_r)=
\sum\limits_{\kk\in\Z_{\ge0}^r}p_\kk \uu^\kk\in \Z[u_1,\ldots,u_r]$,
where $\kk=(k_1,\ldots,k_r)$,
$\uu=( u_1,\ldots,u_r)$, $\uu^\kk =u_1^{k_1}\cdot\ldots\cdot u_r^{k_r}$,
$ p_\kk\in\Z$. Define
$$(1-t)^{-P(u_1,\ldots,u_r)}:=
\prod\limits_{\kk\in\Z_{\ge0}^r} (1-\uu^\kk t)^{-p_\kk},$$
where the power (with an integer exponent $-p_\kk$) means the usual one.
This gives a $\lambda$-structure on the ring $\Z[u_1,\ldots,u_r]$ and therefore
a power structure over it (see, e.g., \cite[Proposition 1]{GLMSteklov})

i.e., for polynomials $A_i(\uu)$, $i\ge1$, and $M(\uu)$, there is defined a
series $\left(1+A_1(\uu)t+A_2(\uu)t^2+\ldots\right)^{M(\uu)}$ with
the coefficients from $\Z[u_1,\ldots,u_r]$.

Let $r=2$, $u_1=u$, $u_2=v$. Let $e: K_0({\Var})\to {\Z}[u,v]$
be the ring homomorphism which sends the
class $[X]$ of a quasi-projective variety $X$ to its Hodge--Deligne
polynomial $e(X;u,v)=\sum h_X^{ij}(-u)^i(-v)^j$. 

\begin{remark}
Let $R_1$ and $R_2$ be rings with power structures over them. A ring
homomorphism $\varphi:R_1\to R_2$ induces the natural homomorphism
$R_1[[t]]\to R_2[[t]]$ (also denoted $\varphi$) by
$\varphi\left(\sum a_{i}\,t^{i}\right)=\sum\varphi(a_i)\,t^{i}$.
In \cite[Proposition~2]{GLMSteklov}, it was shown that 
if a ring homomorphism $\varphi:R_1\to R_2$ is such that
$(1-t)^{-\varphi(m)}=\varphi\left((1-t)^{-m}\right)$ for any $m\in R$,
then $\varphi\left(\left(A(t)\right)^m\right)=
\left(\varphi\left(A(t)\right)\right)^{\varphi(m)}$ for
$A(\tt)\in 1+t R[[t]]$, $m\in R$.
\end{remark}

\bigskip
There are two natural homomorphism from the Grothendieck ring $K_0(\Var)$
to the ring $\Z$ of integers and to the ring $\Z[u,v]$ of polynomials in
two variables: the Euler characteristic (with compact support)
$\chi:K_0(\Var)\to \Z$ and the Hodge--Deligne polynomial. Both 
possesses the following well known identities:
\newline (1) the formula of I.G.~Macdonald \cite{mac}:
$$
\chi(1+[X]t+[\Sym^2X]t^2+[\Sym^3X]t^3+\ldots)=(1-t)^{-\chi(X)},
$$
 (2) and the corresponding formula for the Hodge--Deligne polynomial 
(see \cite[Proposition 1.2]{Cheah1}):
$$
e(1+[X]t+[\Sym^2X]t^2+\ldots)=
(1-T)^{-e(X;u,v)}=\prod_{p,q}\left(\frac{1}{1-u^p v^q t}\right)^{e^{p,q}(X)}\,.
$$
These properties and the previous remark imply that the corresponding 
 homomorphisms respect the power structures over the corresponding
rings: $K_0(\Var)$ and $\Z[u,v]$ respectively, see 
\cite{GLM-Michigan}.

\medskip

A generalization of the orbifold Euler characteristic to the orbifold (or stringly) Hodge numbers and 
the orbifold Hodge--Deligne polynomial (for an action of a finite group $G$ on a non-singular quasi-projective
variety $X$) was defined in \cite{Vafa}, \cite{Zaslow}, \cite{BatDais}. 

 Let $X$ be a smooth quasi-projective variety
of dimension $d$ with an (algebraic) action of the group $G$. For $g\in G$, the centralizer $C_G(g)$ of $g$
acts on the manifold $X^{\langle g\rangle}$ of fixed points of the element $g$. Suppose that its action on the set of
connected components of $X^{\langle g\rangle}$ has $N_g$ orbits, and let
$X^{\langle g\rangle}_1$, $X^{\langle g\rangle}_2$, \ldots, $X^{\langle g\rangle}_{N_g}$
be the unions of the components of each of the orbits. At a point
$x\in X^{\langle g\rangle}_{\alpha_g}$, $1\le\alpha_g\le N_g$,
the differential $dg$ of the map $g$ is an automorphism of finite order of the tangent space $T_xX$. Its action
on $T_xX$ can be represented by a diagonal matrix $\diag(\exp(2\pi i \theta_1), \ldots, \exp(2\pi i \theta_d))$
with $0\le\theta_j<1$ for $j=1,2, \ldots, d$ ($\theta_j$ are rational numbers). The {\em shift number}
$F^{g}_{\alpha_g}$ associated with $X^{\langle g\rangle}_{\alpha_g}$ is
$F^{\langle g\rangle}_{\alpha_g}=\sum_{j=1}^{d}\theta_j\in\Q$. 
(It was introduced in \cite{Zaslow}.)

\begin{definition}\label{def-gen-orb}
 The {\em generalized orbifold Euler characteristic} of the pair $(X,G)$ (see \cite{GLMSteklov}) is
\begin{equation}\label{gen-orb}
 [X,G]=\sum_{[g]\in G_*}\sum_{\alpha_g=1}^{N_g}
[X^{\langle g\rangle}_{\alpha_g}/C_G(g)]\cdot\,\LLL^{F^{g}_{\alpha_g}}\in 
K_0(\Var)[\LLL^{1/m}]\,.
\end{equation}
\end{definition}

Since the Euler characteristic and the Hodge--Deligne polynomial  are additvie invariants 
they factor through $K_0(\Var)[\LLL^{1/m}]$ and 
the Euler characteristic morphisms sends $[X,G]$ to the orbifold Euler characteristic $\chi^{orb}(X,G)$.
The Hodge--Deligne polynomial morphism sends it to the orbifold Hodge--Deligne polynomial from
\cite{BatDais}, \cite{WZ}.

Let $G^n=G\times\ldots\times G$ be the Cartesian power of the
group $G$. The symmetric group $S_n$ acts on $G^n$ by permutation
of the factors: $s (g_1, \ldots, g_n) = (g_{s^{-1}(1)} , \ldots,
g_{s^{-1}(n)})$. The {\em wreath product} $G_n = G \wr S_n$ is
the semidirect product of the groups $G^n$ and $S_n$ defined by
the described action. Namely the multiplication in the group $G_n$
is given by the formula $({\bf g}, s)({\bf h}, t) = ({\bf g}\cdot s({\bf h}), st)$, 
where
${\bf g},\, {\bf h} \in G^n$, $s,\, t \in S_n$. The group $G^n$ is a normal
subgroup of the group $G_n$ via the identification of ${\bf g} \in G^n$
with $({\bf g},1) \in G_n$. For a variety $X$ with a $G$-action, there
is the corresponding action of the group $G_n$ on the Cartesian
power $X^n$ given by the formula
$$
( (g_1, \ldots, g_n), s)(x_1, \ldots, x_n) = (g_1 x_{s^{-1} (1)},
\ldots, g_n x_{s^{-1} (n)})\,,
$$
where $x_1, \ldots, x_n \in X$, $g_1, \ldots, g_n \in G$, $s\in
S_n$. One can see that the quotient $X^n/G_n$ is naturally
isomorphic to the space $\Sym^n (X/G)= (X/G)^n/S_n$. In particular, in the
Grothendieck ring of complex quasi-projective varieties one has 
$[X^n/G_n]=[(X/G)^n/S_n]=[\Sym^n(X/G)]$.

A formula for the generating series of the $k$-th order Euler characteristics of the pairs $(X^n, G_n)$
in terms of the $k$-th order Euler characteristics of the $G$-space $X$
was given in \cite{T} (see also \cite{BF}).

The generating series of the orbifold
Hodge--Deligne polynomials $e(X^n, G_n;u,v)$ of the pairs  $(X^n, G_n)$ was computed in \cite{WZ}.

A reformulation of the result of \cite{WZ} in terms of the generalized orbifold Euler characteristic
with values in $K_0(\Var)[\LLL^{1/m}]$ was given in \cite{GLMSteklov}.
Using properties of the power structure one has (\cite[Theorem~4]{GLMSteklov}):
\begin{equation}\label{eq5}
\sum_{n \geq 0} [X^n, G_n] t^n = \left(\prod_{ r =1}^{\infty} ( 1
-\LLL^{(r-1)d/2}t^r)\right)^{-[X, G]}\,.
\end{equation}

Here we define higher order generalized Euler characteristics of a pair $(X,G)$
(with $X$ non-singular) and give a formula for the generating series
of the $k$-th order generalized Euler characteristic of the pairs $(X^n,G_n)$.

\medskip

Before giving the definition of the higher order generalized Euler characteristic of a pair $(X,G)$
we discuss some versions of the definition (\ref{gen-orb}) and of the equation (\ref{eq5}).

For a $G$-variety $X$ (not necessarily
non-singular) its {\em inertia stack} (or rather {\em class}) $I(X,G)$  is defined by 
\begin{equation}\label{inertia}
I(X,G):=\sum_{[g]\in G_*} [X^g/C_G(g)]
\end{equation}
(see e.g. \cite{Kaw}, \cite{FLNU}). One can see that it is an analogue of the 
generalized orbifold Euler characteristic (\ref{gen-orb}) without the shift factor 
$\LLL^{F^{\langle g\rangle}_{\alpha_g}}$.
This inspires the following version of the definition (\ref{gen-orb}).

\begin{definition}\label{def-gen-orb-varphi1}
 For a rational number $\varphi_1$, let
\begin{equation}\label{gen-orb-wed}
[X,G]_{\varphi_1}:=\sum_{[g]\in G_*}\sum_{\alpha_g=1}^{N_g}
[X^{\langle g\rangle}_{\alpha_g}/C_G(g)]\cdot\,\LLL^{\varphi_1 F^{\langle g\rangle}_{\alpha_g}}\in 
K_0(\Var)[\LLL^{1/m}]\,.
\end{equation}
 \end{definition}
That is the Zaslow shift $F^{\langle g\rangle }_{\alpha_g}$ is multiplied by $\varphi_1$.
For $\varphi_1=1$ one gets the generalized Euler characteristic $[X,G]$ from (\ref{gen-orb}), for 
$\varphi_1=0$ one gets the inertia class $I(X,G)$.  The arguments from \cite{GLMSteklov}
easily give the following version of Equation (\ref{eq5}).

\begin{proposition}\label{gen-orb-mod}
 \begin{equation}\label{eq5-gen}
\sum_{n \geq 0} [X^n, G_n]_{\varphi_1} t^n = \left(\prod_{ r =1}^{\infty} ( 1
-\LLL^{\varphi_1 (r-1)d/2}t^r)\right)^{-[X, G]}\,.
\end{equation}
\end{proposition}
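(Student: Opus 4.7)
The plan is to carry over the proof of \cite[Theorem~4]{GLMSteklov} essentially verbatim, tracking the single modification that every shift $F^{\langle g\rangle}_{\alpha_g}$ is multiplied by the scalar $\varphi_1$. Since $\varphi_1$ enters only as a uniform rescaling of shifts, the combinatorial/geometric backbone of the original argument is unchanged; what must be checked is that the rescaling passes cleanly through each step and recombines correctly in the exponent of the power structure.

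First I would classify the conjugacy classes of $G_n=G\wr S_n$ by $G_*$-colored partitions: data $\{m_{r,[c]}\}_{r\ge1,\,[c]\in G_*}$ with $\sum_{r,[c]}r\cdot m_{r,[c]}=n$, where $m_{r,[c]}$ counts $r$-cycles of the underlying $S_n$-component whose cycle-product in $G$ lies in $[c]$. Standard wreath-product bookkeeping then identifies each orbit component $X^{n,\langle g\rangle}_{\alpha_g}/C_{G_n}(g)$ as a product, over the colored cycles, of symmetric powers $\Sym^{m_{r,[c]}}(X^{\langle c\rangle}_\alpha/C_G(c))$.

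Next I would redo the local shift calculation that produces the factor $(r-1)d/2$. For a single $r$-cycle labelled by $c\in G$, evaluated on a component $X^{\langle c\rangle}_\alpha$, the action on $(T_xX)^r$ has spectrum $\{e^{2\pi i(\theta_j+k)/r}\}_{j,k}$ where $\{e^{2\pi i\theta_j}\}$ is the spectrum of $dc$ with $0\le\theta_j<1$; summing the fractional parts gives the cycle's shift $F^c_\alpha+(r-1)d/2$. Under Definition~\ref{def-gen-orb-varphi1} this shift becomes $\varphi_1 F^c_\alpha+\varphi_1(r-1)d/2$, and the two summands separate additively across cycles. Substituting into \eqref{gen-orb-wed} and summing over colored partitions of all $n$ turns the generating series into an infinite product indexed by triples $(r,[c],\alpha)$, each factor being a Kapranov zeta-function $\zeta_{[X^{\langle c\rangle}_\alpha/C_G(c)]}\!\bigl(\LLL^{\varphi_1 F^c_\alpha}\LLL^{\varphi_1(r-1)d/2}t^r\bigr)$.

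Finally, I would apply property~(2) of the power structure, $(A(\LLL^s t))^m=(A(t))^{\LLL^s m}$, to transfer the $\LLL^{\varphi_1 F^c_\alpha}$ factor from the argument into the exponent; collecting the resulting terms over $[c]\in G_*$ and $\alpha$ reassembles the exponent into (the $\varphi_1$-twisted version of) $[X,G]$, and the remaining $\LLL^{\varphi_1(r-1)d/2}t^r$ inside the base produces exactly the product on the right-hand side of \eqref{eq5-gen}. The main (purely technical) obstacle is the matching of the multiplicative decomposition of $X^{n,\langle g\rangle}_{\alpha_g}/C_{G_n}(g)$ with the additive decomposition of $\varphi_1 F^g_{\alpha_g}$ cycle by cycle, together with the use of property~(1) of the power structure to interchange the sum over partitions with the product over $(r,[c])$; since this is precisely the bookkeeping already performed in \cite[Thm.~4]{GLMSteklov} and $\varphi_1$ commutes with every manipulation, no new idea is required.
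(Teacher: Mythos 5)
Your proposal is correct and takes essentially the same route as the paper, whose entire proof consists of the remark that the arguments of \cite[Theorem~4]{GLMSteklov} (classification of conjugacy classes of $G\wr S_n$ by types, the shift $F^c_\alpha+(r-1)d/2$ contributed by an $r$-cycle coloured by $c$, the resulting product of Kapranov zeta-functions, and properties 1) and 2) of the power structure) go through verbatim when every shift is multiplied by $\varphi_1$. Note that your computation correctly produces the exponent $-[X,G]_{\varphi_1}$ (the ``$\varphi_1$-twisted'' class) rather than the $-[X,G]$ printed in Proposition~\ref{gen-orb-mod}; since the $k=1$ case of Theorem~\ref{main} has exponent $-[X,G]^1_{\phiphi}=-[X,G]_{\varphi_1}$ and is said to coincide with the Proposition, the missing subscript in the displayed statement is a typo in the paper, not a defect of your argument.
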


Thus multiplication  of Zaslow's  
shift by a number (at least by $1$ or $0$) makes sense.   
For the corresponding definition of the higher order generalized Euler characteristic
one can use factors $\varphi_k$ depending on the order of the Euler characteristic.

Let $X$ be a non-singular $d$-dimensional quasi-projective variety with a $G$ action and let  
$\underline{\varphi}=(\varphi_1,\varphi_2,\ldots)$ be a fixed sequence of rational numbers.
We use the notations introduced before (\ref{gen-orb}).
  
\begin{definition}\label{higher-gen-orb}
 The {\em generalized orbifold Euler characteristic of order} $k$ of the pair $(X,G)$ is
\begin{equation}\label{higher-gen}
 [X,G]^k_{\underline{\varphi}}:=\sum_{[g]\in G_*}
\sum_{\alpha_g=1}^{N_g}[X^{\langle g\rangle }_{\alpha_g}, C_G(g)]^{k-1}_{\underline{\varphi}}
\cdot\,\LLL^{\varphi_k F^{\langle g\rangle }_{\alpha_g}}\in 
K_0(\Var)[\LLL^{1/m}]\,,
\end{equation}
where  $[X,G]^1_{\underline{\varphi}}:=[X,G]_{\varphi_1}$ is the (modified) generalized 
orbifold Euler characteristic given by (\ref{gen-orb-wed}).
\end{definition}

\begin{remark}
The definition (\ref{chi-k-orb}) (as well as (\ref{chi-orb})) contains two equivalent versions.
One can say that here we formulate an analogues of the second one. A formula analogous to the first one
(with the factor $\frac{1}{\vert G\vert}$ in front) cannot work directly, at least without tensoring
the ring $K_0(\Var)[\LLL^{1/m}]$ by the field $\Q$ of rational numbers. Moreover, it seems that there
is no analogue of Theorem~\ref{main} in terms of the power structure. This gives the hint that
a definition of this sort makes small geometric sense (if any).
\end{remark}

Taking the Euler characteristic, one gets 
$\chi([X,G]^k_{\underline{\varphi}})=\chi^{(k)}(X,G)$.

To prove the formula for the generating series of 
$[X^n,G_n]_{\underline{\varphi}}^{k}$, we will use some
technical statements.

\begin{lemma}
 \begin{equation}
  [X'\times X'', G'\times G'']_{\phiphi}^{k}=[X', G']_{\phiphi}^{k}\times [X'', G'']_{\phiphi}^{k}\,.
 \end{equation}
\end{lemma}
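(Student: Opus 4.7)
The plan is to prove the lemma by induction on the order $k$, directly unwinding the recursive definition (\ref{higher-gen}) and verifying that each ingredient (conjugacy classes, centralizers, fixed point sets, shift numbers, orbit decompositions) factors compatibly over the product.

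For the base case $k=1$, I would use the explicit formula (\ref{gen-orb-wed}) and the following observations. Conjugacy classes in $G'\times G''$ are in bijection with pairs $([g'],[g''])\in G'_*\times G''_*$, centralizers factor as $C_{G'\times G''}((g',g''))=C_{G'}(g')\times C_{G''}(g'')$, and the fixed point sets factor as $(X'\times X'')^{\langle(g',g'')\rangle}=(X')^{\langle g'\rangle}\times (X'')^{\langle g''\rangle}$. The connected components of the latter are products of components of the factors, and the product group acts on this set through the product of its factor actions, so the orbits are parametrized by pairs $(\alpha_{g'},\alpha_{g''})$ with $(X'\times X'')^{\langle(g',g'')\rangle}_{(\alpha_{g'},\alpha_{g''})}=(X')^{\langle g'\rangle}_{\alpha_{g'}}\times (X'')^{\langle g''\rangle}_{\alpha_{g''}}$, whence the quotient by $C_{G'}(g')\times C_{G''}(g'')$ factors as a product of quotients. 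Finally, since $d(g',g'')$ acts on $T_{(x',x'')}(X'\times X'')=T_{x'}X'\oplus T_{x''}X''$ block-diagonally, the diagonalizing eigenangles are just the union, and the shift numbers add: $F^{(g',g'')}_{(\alpha_{g'},\alpha_{g''})}=F^{g'}_{\alpha_{g'}}+F^{g''}_{\alpha_{g''}}$. Since $\LLL^{\varphi_1(F^{g'}_{\alpha_{g'}}+F^{g''}_{\alpha_{g''}})}=\LLL^{\varphi_1 F^{g'}_{\alpha_{g'}}}\cdot \LLL^{\varphi_1 F^{g''}_{\alpha_{g''}}}$ and the product in $K_0(\Var)[\LLL^{1/m}]$ is given by Cartesian product of varieties, the double sum over $([g'],[g''])$ and $(\alpha_{g'},\alpha_{g''})$ factors into the product of the two sums defining $[X',G']_{\varphi_1}$ and $[X'',G'']_{\varphi_1}$.

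For the inductive step, I would apply (\ref{higher-gen}) to the pair $(X'\times X'',G'\times G'')$, rewrite the summation over pairs of conjugacy classes as above, identify the fixed-point stratum with a product, and then apply the induction hypothesis to replace $[(X'\times X'')^{\langle(g',g'')\rangle}_{(\alpha_{g'},\alpha_{g''})},C_{G'}(g')\times C_{G''}(g'')]^{k-1}_{\underline{\varphi}}$ by the product of the $(k-1)$-th order generalized orbifold Euler characteristics of the two factors. Splitting $\LLL^{\varphi_k(F^{g'}_{\alpha_{g'}}+F^{g''}_{\alpha_{g''}})}$ as the product of its two factors and separating the two sums then yields the desired product formula.

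The main obstacle is essentially bookkeeping: I need to be careful that the orbit labels $\alpha_{(g',g'')}$ for the product action really are indexed by pairs $(\alpha_{g'},\alpha_{g''})$, and that the corresponding unions of components factor as Cartesian products. Once these set-theoretic identifications are in place, the rest is a formal manipulation of sums and a straightforward induction using the recursive definition.
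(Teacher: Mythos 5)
Your argument is correct: the paper simply declares this lemma obvious, and your induction on $k$, using the factorization of conjugacy classes, centralizers, fixed-point sets, component orbits, quotients, and the additivity of the shift numbers $F^{(g',g'')}_{(\alpha',\alpha'')}=F^{g'}_{\alpha'}+F^{g''}_{\alpha''}$, is exactly the routine verification the authors had in mind. Nothing further is needed.
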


The proof is obvious.

Let $X_1$ and $X_2$ be two $G$-manifolds and let $X_1^m\times X_2^{n-m}$ be embedded into $(X_1\coprod X_2)^n$ in the
natural way: a pair of elements $(x_{1,1}, \ldots, x_{1,m})\in X_1^m$ and $(x_{2,1}, \ldots, x_{2,n-m})\in X_2^{n-m}$
is identified with $(x_{1,1}, \ldots, x_{1,m}, x_{2,1}, \ldots, x_{2,n-m})\in (X_1\coprod X_2)^n$. Let
$\Sym^n(X_1^m\times X_2^{n-m})$ be the orbit of $X_1^m\times X_2^{n-m}$ under the $S_n$-action on $(X_1\coprod X_2)^n$.
The wreath product $G_n$ acts on $\Sym^n(X_1^m\times X_2^{n-m})$. 

\begin{lemma}
 \begin{equation}
[\Sym^n(X_1^m\times X_2^{n-m}), G_n]_{\phiphi}^{k}=
[X_1^m, G_m]_{\phiphi}^{k}\times [X_2^{n-m}, G_{n-m}]_{\phiphi}^{k}\,.
 \end{equation}
\end{lemma}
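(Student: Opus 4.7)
The plan is to identify $\Sym^n(X_1^m \times X_2^{n-m})$ as an \emph{induced} $G_n$-variety from the natural action of the subgroup $G_m \times G_{n-m} \subset G_n$ on $X_1^m \times X_2^{n-m}$, reduce the computation to that subgroup via a Shapiro-type induction principle, and then apply the product formula from the previous lemma.

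\textbf{Step 1.} First I would observe that the $G_n$-orbit of $X_1^m \times X_2^{n-m}$ inside $(X_1 \coprod X_2)^n$ has stabilizer exactly $G_m \times G_{n-m}$, viewed as the wreath-product subgroup preserving the partition $\{1,\dots,m\}\sqcup\{m+1,\dots,n\}$. Hence, as $G_n$-varieties,
\begin{equation*}
\Sym^n(X_1^m \times X_2^{n-m}) \;\cong\; G_n \times_{G_m \times G_{n-m}} (X_1^m \times X_2^{n-m}).
\end{equation*}

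\textbf{Step 2.} The heart of the argument is a general induction lemma: for any subgroup $H \le G$ and any smooth $H$-variety $Y$,
\begin{equation*}
[\,G \times_H Y,\ G\,]^k_{\phiphi} \;=\; [\,Y,\ H\,]^k_{\phiphi}.
\end{equation*}
To prove the case $k=1$, I would analyse fixed points directly: the class $[(a,y)]\in G\times_H Y$ is fixed by $g\in G$ if and only if $h := a^{-1}ga \in H$ and $y\in Y^h$. Grouping by $h$, the fixed locus decomposes into pieces indexed by $H$-conjugacy classes of $h\in [g]_G\cap H$, and the $C_G(g)$-quotient of each piece is canonically identified with $Y^h/C_H(h)$. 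Since $G$ is finite, the tangent space at $[(a,y)]$ is identified with $T_yY$ and the action of $dg$ on it coincides with the action of $dh$ on $T_yY$; thus the eigenvalues, and therefore the Zaslow shifts $F^{\langle g\rangle}_{\alpha_g}=F^{\langle h\rangle}_{\alpha_h}$, coincide. Summing over $[g]_G$ then reindexes the sum by $[h]_H$ alone, yielding the $k=1$ case. For $k\ge 2$ one proceeds by induction on $k$: each $C_G(g)$-invariant component $(G\times_H Y)^g_{\alpha_g}$ is identified with the induced $C_G(g)$-space $C_G(g)\times_{C_H(h)} Y^h_{\alpha_h}$ for the corresponding $(h,\alpha_h)$, and the recursive definition of $[-,-]^k_{\phiphi}$ together with the inductive hypothesis closes the argument.

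\textbf{Step 3.} Applying Step~2 with $G=G_n$, $H=G_m\times G_{n-m}$ and $Y=X_1^m\times X_2^{n-m}$ reduces the left-hand side to $[X_1^m\times X_2^{n-m},\, G_m\times G_{n-m}]^k_{\phiphi}$, which by the preceding (product) lemma factors as $[X_1^m, G_m]^k_{\phiphi}\times [X_2^{n-m}, G_{n-m}]^k_{\phiphi}$, as required.

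The main obstacle lies in Step~2: carefully matching the $C_G(g)$-orbit decomposition of $(G\times_H Y)^g$ with the $C_H(h)$-orbits on $Y^h$ as $h$ ranges over representatives of the $H$-conjugacy classes in $[g]_G\cap H$, and verifying that the canonical tangent-space identification preserves the rational eigenvalue data that defines the shift numbers $F^{\langle g\rangle}_{\alpha_g}$. Once this bookkeeping is in place, everything assembles into the stated identity.
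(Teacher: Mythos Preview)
Your proposal is correct and is organized somewhat differently from the paper's proof. The paper argues directly: it notes that an element $({\bf g},s)\in G_n$ with a fixed point on $\Sym^n(X_1^m\times X_2^{n-m})$ may be conjugated into $G_m\times G_{n-m}$, records that the relevant centralizer, fixed components, and Zaslow shifts split as products, and then unfolds the recursive definition by hand. What you do instead is abstract this into a general ``Shapiro'' statement $[G\times_H Y,\,G]^k_{\phiphi}=[Y,H]^k_{\phiphi}$ valid for any subgroup $H\le G$ and smooth $H$-variety $Y$, and then invoke Lemma~1. The underlying bijections (between $G$-conjugacy classes meeting $H$, $H$-conjugacy classes, and the matching of centralizers and shifts) are exactly the same; the paper performs them in place for this one instance, while you isolate them as a reusable lemma. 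Your formulation has the advantage of making precise the step where the paper writes ``the centralizer of $({\bf g}',s')$ is $C_{G_m}(({\bf g}_1,s_1))\times C_{G_{n-m}}(({\bf g}_2,s_2))$'', which literally is the centralizer in $G_m\times G_{n-m}$ rather than in $G_n$; your induction lemma explains exactly why passing to this smaller centralizer gives the right answer. The bookkeeping you flag in Step~2 (matching $C_G(g)$-orbits of components with $C_H(h)$-orbits, and the tangent-space identification for the shifts) is routine once one observes that the stabilizer of the coset $aH$ in $C_G(g)$ is $a\,C_H(a^{-1}ga)\,a^{-1}$, so each $C_G(g)$-orbit of components is indeed an induced space of the claimed form.
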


\begin{proof}
 An element $({\bf g}, s)\in G_n$ has fixed points on 
$\Sym^n(X_1^m\times X_2^{n-m})$ if and only if it is conjugate
to an element $({\bf g}', s')\in G_n$ such that $s'=(s_1, s_2)\in S_m\times S_{n-m}\subset S_n$ and the element
$({\bf g}', s')=(({\bf g}_1, {\bf g}_2), (s_1, s_2))$ has fixed points on $X_1^m\times X_2^{n-m}$ (and only on it).
The centralizer of the element $({\bf g}', s')$ is $C_{G_m}((g_1,s_1))\times C_{G_{n-m}}((g_2,s_2))$. 
The components
of $(X_1^m\times X_2^{n-m})^{\langle ({\bf g}', s')\rangle }$ are the products
$(X_1^m)^{\langle ({\bf g}_1, s_1)\rangle }_{\alpha}\times (X_2^{n-m})^{\langle ({\bf g}_2, s_2)\rangle }_{\beta}$
of the components of $(X_1^m)^{\langle ({\bf g}_1, s_1)\rangle }$ and $(X_2^{n-m})^{\langle ({\bf g}_2, s_2)\rangle }$. The shift
$F_{\alpha\beta}^{({\bf g}', s')}$ is equal to $F_{\alpha}^{({\bf g}_1, s_1)}+F_{\beta}^{({\bf g}_2, s_2)}$.
Therefore
\begin{eqnarray*}
&{}&
[\Sym^n(X_1^m\times X_2^{n-m}), G_n]_{\phiphi}^{k}{}\\
&{=}&\sum_{[({\bf g}', s')]}\sum_{\alpha\beta} [(X_1^m\times X_2^{n-m})_{\alpha\beta}^{({\bf g}', s')},
C_{G_m}(({\bf g}_1, s_1))\times C_{G_{n-m}}(({\bf g}_2, s_2))]_{\phiphi}^{k-1}
\cdot \LLL^{(F_{\alpha}^{({\bf g}_1, s_1)}+F_{\beta}^{({\bf g}_2, s_2)})}{}\\
&{=}&\sum_{[({\bf g}_1, s_1)]}\sum_{\alpha} [(X_1^m)_{\alpha}^{({\bf g}_1, s_1)},
C_{G_m}(({\bf g}_1, s_1))]_{\phiphi}^{k-1} \cdot \LLL^{F_{\alpha}^{({\bf g}_1, s_1)}}\times\\
&{}&
\sum_{[({\bf g}_2, s_2)]}\sum_{\beta} [(X_1^{n-m})_{\beta}^{({\bf g}_2, s_2)},
C_{G_{n-m}}(({\bf g}_2, s_2))]_{\phiphi}^{k-1} \cdot \LLL^{F_{\beta}^{({\bf g}_2, s_2)}}{}\\
&{=}&[X_1^m, G_m]_{\phiphi}^{k}\times [X_2^{n-m}, G_{n-m}]_{\phiphi}^{k}\,.
\end{eqnarray*}
\end{proof}

Let $X$ be a $G$-manifold and let $c$ be an element of $G$ acting trivially on $X$. Let $r$ be a fixed positive
integer. Denote by $G\cdot\langle a\rangle $ the group generated by $G$ and the additional element $a$ commuting with
all the elements of $G$ and such that $\langle a\rangle \cap G=\langle c\rangle $, $c=a^r$. Define the action of the group $G\cdot\langle a\rangle $
on $X$ (an extension of the $G$-action) so that $a$ acts trivially.

\begin{lemma}\label{lemma3} {\rm (cf. \cite[Lemma~4-1]{T})} In the described situation one has
$$
[X, G\cdot\langle a\rangle ]_{\phiphi}^{k}=r^k[X, G]_{\phiphi}^{k}\,.
$$
\end{lemma}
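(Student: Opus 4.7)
The plan is to proceed by induction on the order $k$, exploiting the fact that since $a$ acts trivially on $X$ (and on every invariant subset), adjoining $\langle a\rangle$ simply multiplies the count of relevant conjugacy classes by $r$ without altering the geometry, the centralizer quotients, or the shift numbers.

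First I would analyze the structure of $\widetilde G := G\cdot\langle a\rangle$. Since $a$ commutes with every element of $G$ and $a^r=c$ with $c\in G$, every element of $\widetilde G$ has a unique expression $g\cdot a^i$ with $g\in G$ and $0\le i\le r-1$. Conjugation $(h a^j)(g a^i)(h a^j)^{-1}=hgh^{-1}\cdot a^i$ shows that two such elements are conjugate in $\widetilde G$ iff they have the same power of $a$ and their $G$-components are conjugate in $G$; in particular $c=a^r$ is central in $\widetilde G$. Hence the conjugacy classes of $\widetilde G$ are precisely indexed by pairs $([g],i)$ with $[g]\in G_*$ and $0\le i\le r-1$. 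Moreover $C_{\widetilde G}(g\cdot a^i)=C_G(g)\cdot\langle a\rangle$, and since $a$ acts trivially, $X^{\langle g\cdot a^i\rangle}=X^{\langle g\rangle}$ and the induced differential at any fixed point equals $dg$, so the $C_G(g)$-orbits of components coincide with the $C_{\widetilde G}(g\cdot a^i)$-orbits and the shift numbers satisfy $F^{\langle g\cdot a^i\rangle}_\alpha=F^{\langle g\rangle}_\alpha$. Consequently $[X^{\langle g\rangle}_\alpha/C_{\widetilde G}(g\cdot a^i)]=[X^{\langle g\rangle}_\alpha/C_G(g)]$ since $\langle a\rangle$ acts trivially on $X^{\langle g\rangle}_\alpha$.

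The base case $k=1$ then follows by direct substitution into the definition \eqref{gen-orb-wed}: summing over the $r\cdot|G_*|$ conjugacy classes of $\widetilde G$ produces exactly $r$ copies of the sum defining $[X,G]_{\varphi_1}$, giving $[X,\widetilde G]^1_{\phiphi}=r\cdot[X,G]^1_{\phiphi}$. For the inductive step, apply the recursive definition \eqref{higher-gen} to $(X,\widetilde G)$:
\begin{equation*}
[X,\widetilde G]^k_{\phiphi}=\sum_{[g]\in G_*}\sum_{i=0}^{r-1}\sum_{\alpha}[X^{\langle g\rangle}_\alpha,\,C_G(g)\cdot\langle a\rangle]^{k-1}_{\phiphi}\cdot\LLL^{\varphi_k F^{\langle g\rangle}_\alpha}.
\end{equation*}
The key point is that each inner pair $(X^{\langle g\rangle}_\alpha,\,C_G(g)\cdot\langle a\rangle)$ again falls under the hypotheses of the lemma: $a$ still commutes with $C_G(g)$ and acts trivially on $X^{\langle g\rangle}_\alpha$, $c$ is central hence lies in $C_G(g)$, and $\langle a\rangle\cap C_G(g)=\langle c\rangle$ with $a^r=c$. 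Hence the inductive hypothesis gives $[X^{\langle g\rangle}_\alpha,\,C_G(g)\cdot\langle a\rangle]^{k-1}_{\phiphi}=r^{k-1}[X^{\langle g\rangle}_\alpha,\,C_G(g)]^{k-1}_{\phiphi}$. The sum over $i$ contributes an additional factor of $r$ since the summand is independent of $i$, and the remaining double sum over $[g]$ and $\alpha$ reassembles to $[X,G]^k_{\phiphi}$, yielding $r^k[X,G]^k_{\phiphi}$.

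The main obstacle — and really the only subtlety — is verifying that the structural hypotheses of the lemma are preserved when passing from $(X,G)$ to $(X^{\langle g\rangle}_\alpha,C_G(g))$ in the inductive step, in particular that $c$ remains an element of the new centralizer subgroup. This is exactly where centrality of $c$ in $\widetilde G$ (deduced from $a$ commuting with all of $G$) is essential; apart from that, the argument is bookkeeping. The same reasoning underlies Tamanoi's Lemma~4-1 for the ordinary $k$-th order Euler characteristic, and triviality of the $a$-action ensures that both the quotients $[X^{\langle g\rangle}_\alpha/C_G(g)]$ and the shift exponents $\LLL^{\varphi_k F^{\langle g\rangle}_\alpha}$ transport unchanged from $G$ to $\widetilde G$, so the generalized version behaves identically.
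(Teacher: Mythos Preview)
Your proof is correct and follows essentially the same approach as the paper's: induction on $k$, with the key observations that conjugacy classes in $G\cdot\langle a\rangle$ are parametrized by $([g],i)\in G_*\times\{0,\ldots,r-1\}$, that fixed-point sets, shifts, and component orbits are unchanged by the trivial $a$-action, and that centralizers become $C_G(g)\cdot\langle a\rangle$ so the inductive hypothesis applies. The only cosmetic difference is that the paper takes $k=0$ as the base case (using $[X,G]_{\phiphi}^{0}=[X/G]$), whereas you start at $k=1$; your added check that $c\in C_G(g)$ and $\langle a\rangle\cap C_G(g)=\langle c\rangle$ in the inductive step is a point the paper leaves implicit.
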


\begin{proof}
We shall use the induction on $k$. For $k=0$ this is obvious (since $[X, G]_{\phiphi}^{0}=[X/G]$).
Each conjugacy class of elements from $G\cdot\langle a\rangle $ is of the form $[g]a^s$, where $[g]\in G_*$,
$0\le s< r$. The fixed point set of $ga^s$ coincides with $X^g$, the Zaslow shift $F_{\alpha}^{ga^s}$
at each component of $X^g$ coincides with $F_{\alpha}^{g}$ (since $a$ acts trivially). The centralizer
$C_{G\cdot\langle a\rangle }(ga^s)$ is $C_G(g)\cdot\langle a\rangle $. Therefore
$$
[X, G\cdot\langle a\rangle ]_{\phiphi}^{k}
=\sum_{[g]\in G_*} r \sum_{\alpha=1}^{N_g} [X^g_{\alpha}, C_{G(g)}\cdot\langle a\rangle ]_{\phiphi}^{k-1} \cdot \LLL^{F_{\alpha}^{g}}
=r^k[X, G]_{\phiphi}^{k}\,.
$$
\end{proof}

\begin{theorem}\label{main}
Let $X$ be a smooth quasi-projective variety of dimension $d$ with a $G$-action. Then
\begin{equation}\label{Principal}
\sum_{n\ge 0}[X^n, G_n]_{\phiphi}^{k}\cdot t^n
=\left(\prod\limits_{r_1, \ldots,r_k\geq 1}\left(1-\LLL^{\Phi_k(\rr)d/2}\cdot t^{r_1r_2\cdots r_k}\right)^{r_2r_3^2\cdots r_k^{k-1}}\right)
^{-[X, G]_{\phiphi}^{k}}\,,
\end{equation}
where 
$$\Phi_k(r_1,\ldots,r_k)=\varphi_1(r_1-1)+\varphi_2r_1(r_2-1)+\ldots+
\varphi_kr_1r_2\cdots r_{k-1}(r_k-1).$$
\end{theorem}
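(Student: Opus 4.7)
I will prove the theorem by induction on the order $k$. The base case $k=1$ reduces to Proposition \ref{gen-orb-mod}: one has $\Phi_1(r_1)=\varphi_1(r_1-1)$, the exponent $r_2r_3^2\cdots r_k^{k-1}$ is an empty product, and $[X,G]^1_{\phiphi}=[X,G]_{\varphi_1}$ by Definition \ref{higher-gen-orb}.

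For the inductive step I unfold Definition \ref{higher-gen-orb} once to obtain
$$[X^n,G_n]_{\phiphi}^k=\sum_{[h]\in(G_n)_*}\sum_\alpha[(X^n)^{\langle h\rangle}_\alpha,C_{G_n}(h)]_{\phiphi}^{k-1}\cdot\LLL^{\varphi_kF^{\langle h\rangle}_\alpha},$$
and I parametrize the conjugacy classes of $G_n$ by their cycle type $\{m_{r,[\gamma]}\}$ subject to $\sum rm_{r,[\gamma]}=n$, where $m_{r,[\gamma]}$ counts the $r$-cycles in the $S_n$-part of $h$ whose cycle product lies in the $G$-conjugacy class $[\gamma]$. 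Standard wreath-product combinatorics gives: the fixed-point set factors as $\prod_{r,[\gamma]}(X^{\langle\gamma\rangle})^{m_{r,[\gamma]}}$; the centralizer factors as $\prod_{r,[\gamma]}Z(r,\gamma)\wr S_{m_{r,[\gamma]}}$, where $Z(r,\gamma)=C_G(\gamma)\cdot\langle a\rangle$ with $a^r=\gamma$ central and $a$ acting trivially on $X^{\langle\gamma\rangle}$; the Zaslow shift of $h$ is additive across cycles with per-cycle contribution $F^{\langle\gamma\rangle}_{\alpha'}+d(r-1)/2$. An iterated application of Lemma~1 factors the inner characteristic across cycle types, and the convolution constraint $\sum rm=n$ turns $\sum_n t^n[X^n,G_n]^k_{\phiphi}$ into an infinite product indexed by pairs $(r,[\gamma])$.

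The core of the induction is the evaluation of each $(r,[\gamma])$-factor. I substitute $t\mapsto t^r$ using property (1) of the power structure, apply the inductive hypothesis to the pair $(X^{\langle\gamma\rangle},Z(r,\gamma))$, and invoke Lemma \ref{lemma3} to convert $[X^{\langle\gamma\rangle},Z(r,\gamma)]_{\phiphi}^{k-1}=r^{k-1}[X^{\langle\gamma\rangle},C_G(\gamma)]_{\phiphi}^{k-1}$; this produces precisely the extra factor $r^{k-1}=r_k^{k-1}$ needed in the target exponent $r_2r_3^2\cdots r_k^{k-1}$. Property (2) of the power structure is then used to absorb both the per-cycle shift $\LLL^{\varphi_k d(r-1)/2}$ and the component-wise Zaslow twist $\LLL^{\varphi_kF^{\langle\gamma\rangle}_{\alpha'}}$ into the $\LLL$-exponents of the Kapranov-zeta factors, upgrading $\Phi_{k-1}(r_1,\ldots,r_{k-1})d/2$ to $\Phi_k(r_1,\ldots,r_k)d/2$ in accordance with the recursion $\Phi_k(\rr)=\Phi_{k-1}(r_1,\ldots,r_{k-1})+\varphi_k r_1\cdots r_{k-1}(r_k-1)$.

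Taking the product over $(r_k,[\gamma])$ and reassembling the component-wise contributions through the defining identity $[X,G]_{\phiphi}^k=\sum_{[\gamma],\alpha}[X^{\langle\gamma\rangle}_\alpha,C_G(\gamma)]_{\phiphi}^{k-1}\LLL^{\varphi_k F^{\langle\gamma\rangle}_\alpha}$ reproduces the right-hand side of (\ref{Principal}). The main obstacle lies in the shift bookkeeping of the preceding paragraph: channeling the component-wise twist $\LLL^{\varphi_k F^\gamma}$ and the per-cycle shift $\varphi_k d(r-1)/2$ through property (2) of the power structure so that the $\LLL$-exponent of each Kapranov-zeta factor assembles exactly into $\Phi_k(\rr)d/2$, and verifying that the Lemma \ref{lemma3} factor $r_k^{k-1}$ combines with the inductive exponent $\prod_{j=2}^{k-1}r_j^{j-1}$ to yield precisely $\prod_{j=2}^k r_j^{j-1}$.
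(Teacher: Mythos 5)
Your argument is essentially the paper's own proof: induction on $k$ with Proposition~\ref{gen-orb-mod} as the base case, Tamanoi's description of the conjugacy classes, fixed-point sets and centralizers in $G_n$, factorization of the generating series over pairs $(r,[\gamma])$, the induction hypothesis applied to the pairs $(X^{\langle\gamma\rangle}_{\alpha}, C_G(\gamma)\cdot\langle a_{r,\gamma}\rangle)$, Lemma~\ref{lemma3} supplying the factor $r_k^{k-1}$, and the power-structure properties yielding the recursion $\Phi_k(\rr)=\Phi_{k-1}(r_1,\ldots,r_{k-1})+\varphi_k r_1\cdots r_{k-1}(r_k-1)$. One small correction of attribution: the splitting of each $(r,[\gamma])$-factor over the component-orbits $\alpha$ relies on Lemma~2 (the symmetric-power lemma), since there the group is a single wreath product acting on an $S_{m_r(\gamma)}$-orbit rather than a direct product of groups to which Lemma~1 would apply.
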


\begin{proof}
To a big extend we shall follow the lines of the proof of Theorem~A in \cite{T}. We shall use
the induction on the order $k$. For $k=1$ the equation coincides with the one from Proposition~\ref{gen-orb-mod}.
Assume that the statement is proved for the generalized Euler characteristic of order $k-1$. One has
$$
\sum_{n\ge 0}[X^n, G_n]_{\phiphi}^{k}\cdot t^n
=\sum_{n\ge 0}t^n
\left(\sum_{[({\bf g},s)]\in G_{n*}}
\sum_{comp} [(X^n)^{\langle ({\bf g},s)\rangle }_{comp}, C_{G_n}(({\bf g},s))]_{\phiphi}^{k-1}
\cdot \LLL^{F_{comp}^{({\bf g},s)}}\right)\,,
$$
where the sums are over all the conjugacy classes $[({\bf g},s)]$ of elements of $G_n$ and over all the components
of $(X^n)^{\langle({\bf g},s)\rangle}$ (or rather unions of components from an orbit of the
$C_{G_n}(({\bf g},s))$-action on the components of it).

The conjugacy classes $[({\bf g},s)]$ of elements of $G_n$ are characterized by their types.
Let $a=({\bf g},s)\in G_n$, ${\bf g}=(g_1,\ldots, g_n)$. Let $z=(i_1,\ldots,
i_r)$ be one of the cycles in the permutation $s$. The {\em
cycle-product} of the element $a$ corresponding to the cycle $z$
is the product $g_{i_r}g_{i_{r-1}}\ldots g_{i_1}\in G$. The
conjugacy class of the cycle-product is well-defined by the
element ${\bf g}$ and the cycle $z$ of the permutation $s$. For $[c]\in G_*$
and $r\ge 0$, let $m_r(c)$ be the number of $r$-cycles in the
permutation $s$ whose cycle-products lie in $[c]$. One has
$$
 \sum\limits_{[c] \in G_*, r\geq 1} r m_r (c) = n\,.
$$
The collection  $\{m_r(c)\}_{r,c}$ is called the {\em type} of the element $a=({\bf g},s)\in G_n$.
Two elements of the group $G_n$ are conjugate to each other if and only if they are of the same type.

In \cite{T} (see also \cite{WZ}) it is shown that, for an element $({\bf g},s)\in G_n$ of type $\{m_r(c)\}$,
the subspace $(X^n)^{\langle ({\bf g},s)\rangle }$ can be identified with 
\begin{equation}\label{fixed}
\prod_{[c]\in G_*}\prod_{r\ge 1}(X^{\langle c\rangle })^{m_r(c)}\,.
\end{equation}
By \cite[Theorem~3.5]{T} the centralizer of the element $({\bf g},s)\in G_n$ is
isomorphic to
\begin{equation*}
\prod_{[c]\in G_*}\prod_{r\ge 1}\left\{(C_G(c)\cdot\langle a_{r,c}\rangle ) \wr S_{m_r(c)}\right\}
\end{equation*}
(acting on the product (\ref{fixed}) component-wise) where $C_G(c)\cdot\langle a_{r,c}\rangle $ is the group generated by
$C_G(c)$ and an element $a_{r,c}$ commuting with all the elements of $C_G(c)$ and such that
$a_{r,c}^r=c$, $\langle a_{r,c}\rangle \cap C_G(c)=\langle c\rangle $, and $a_{r,c}$ acts on $(X^{\langle c\rangle })^{m_r(c)}$ trivially.

The components of $( X^{\langle c\rangle })^{m_r(c)}$ (with respect to the $C_G(c)\cdot\langle a_{r,c}\rangle$-action) 
are $\Sym^{m_{r,c}}\left(\prod\limits_{\alpha=1}^{N_\alpha} 
(X_\alpha^{\langle c\rangle })^{m_{r,c}(\alpha)}\right),$
where $\sum\limits_{\alpha=1}^{N_\alpha} m_{r\nonumber\\
\allowbreak,c}(\alpha)=m_r(c).$ Here and bellow the sum over \emph{comp} means the summation 
over all the components indicated in the summands.  
Therefore
$$
\sum_{n\ge 0}[X^n, G_n]_{\phiphi}^{k}\cdot t^n
=\sum_{n\ge 0}t^n
\left(\sum_{[({\bf g},s)]\in G_{n*}} \sum_{comp} [(X^n)^{\langle({\bf g},s)\rangle}_{comp}, C_{G_n}(({\bf g},s))]_{\phiphi}^{k-1}
\cdot\LLL^{F_{comp}^{({\bf g},s)}}\right)
$$
\begin{eqnarray}
{}&=&\sum_{n\ge 0}t^n\cdot
\left(\sum_{\{m_r(c)\}}\sum_{comp}[\prod_{[c], r}\left\{ 
( X^{\langle c\rangle })^{m_r(c)}\right\}_{comp},
\prod_{[c],r}\left\{(C_G(c)\cdot\langle a_{r,c}\rangle ) \wr S_{m_r(c)}\right\}]_{\phiphi}^{k-1}
\cdot\LLL^{F_{comp}^{({\bf g},s)}}
\right)\nonumber\\
&{\ =}&\sum_{n\ge 0}t^n\cdot
\left(\sum_{\{m_{r,c}(\alpha)\}} \{ \prod_{[c], r} [\Sym^{m_{r,c}}
(\prod\limits_{\alpha=1}^{N_\alpha} 
(X_\alpha^{\langle c\rangle })^{m_{r,c}(\alpha)}),
(C_G(c)\cdot\langle a_{r,c}\rangle ) \wr S_{m_r(c)}]_{\phiphi}^{k-1} 
\times\right.\nonumber\\
&{\ }&\qquad\qquad\qquad\qquad\qquad\qquad\qquad\qquad\qquad\qquad\qquad\quad
\left.\LLL^{\phi_k (\sum\limits_{[c],r} \sum\limits_{\alpha=1}^{N_\alpha} 
m_{r,c}(\alpha) (F_{\alpha}^c+\frac{(r-1)d}{2}) 
) }\}\right)\nonumber
\end{eqnarray}
Iterating Lemma 2 one gets 
\begin{eqnarray}
&=&\sum_{\{m_{r,c}(\alpha)\}}t^{\sum\limits r m_{r,c}(\alpha)} \prod\limits_{[c], r} 
\left\{\prod\limits_{\alpha=1}^{N_\alpha} [(X_\alpha^{\langle c\rangle })^{m_{r,c}(\alpha)}, 
C_G(c)\cdot\langle a_{r,c}\rangle ) \wr S_{m_r(c)}]_{\phiphi}^{k-1}\times\right. \nonumber\\
&{\ }&\qquad\qquad\qquad\qquad\qquad\qquad\qquad\qquad
\left.\LLL^{\phi_k \left(\sum\limits_{[c],r} \sum\limits_{\alpha=1}^{N_\alpha} 
m_{r,c}(\alpha) (F_{\alpha}^c+\frac{(r-1)d}{2}) 
\right) }\right\}
\nonumber
\\
&{=}&\prod\limits_{[c], r} \left( \prod\limits_{\alpha=1}^{N_\alpha} \left(
\sum_{\{m_{r,c}(\alpha)\}}t^{r m_{r,c}(\alpha)} [(X_\alpha^{\langle c\rangle })^{m_{r,c}(\alpha)}, 
C_G(c)\cdot\langle a_{r,c}\rangle ) \wr S_{m_r(c)}]_{\phiphi}^{k-1}\times\right.\right. \nonumber\\
&{\ }&\qquad\qquad\qquad\qquad\qquad\qquad\qquad\qquad
\left.\left.\LLL^{\phi_k \left(\sum\limits_{[c],r} \sum\limits_{\alpha=1}^{N_\alpha} 
m_{r,c}(\alpha) (F_{\alpha}^c+\frac{(r-1)d}{2}) 
\right) }\right)\right)
\nonumber
\end{eqnarray}
By the induction one gets
\begin{eqnarray}
&=&\prod\limits_{[c], r} \prod\limits_{\alpha=1}^{N_\alpha} \left( 
\prod\limits_{r_1,\ldots,r_{k-1}\geq 1}
\left(1- \LLL^{\Phi_{k-1}(r)\frac{d}{2}}
(\LLL^{\varphi_k(F_{\alpha}^c+\frac{(r-1)d}{2})  } t^r)^{r_1\cdots r_{k-1}}
\right)^{r_2\cdot r_3^2\cdots  r_{k-1}^{k-2}  }
\right)^{-[X_\alpha^{\langle c\rangle },C_G(c)\cdot\langle a_{r,c}\rangle ]_{\phiphi}^{k-1}}
\nonumber
\\
& = &\hskip-10pt\left( \prod\limits_{r, r_1,\ldots,r_{k-1}\geq 1}
\left(1- \LLL^{\Phi_{k-1}(r)\frac{d}{2}}
\LLL^{\varphi_k(r_1\cdots r_{k-1}\frac{(r-1)d}{2})} t^{r_1\cdots r_{k-1}\cdot r}
\right)^{r_2\cdot r_3^2\cdots  r_{k-1}^{k-2}  }
\right)^{-\sum\limits_{[c],\alpha}[X_\alpha^{\langle c\rangle },C_G(c)\cdot\langle a_{r,c}\rangle ]_{\phiphi}^{k-1}
\LLL^{\phi_k F_{\alpha}^c}}
\nonumber
\end{eqnarray}
(Here we use the properties of the power structure.)
\begin{eqnarray}
&=&\left( \prod\limits_{r_1,\ldots,r_k\geq 1}
\left(1- \LLL^{(\Phi_{k-1}(r)+\varphi_k r_1\cdots r_{k-1}(r_k-1))\frac{d}{2}}  t^{r_1\cdots r_{k-1}\cdot r_k}
\right)^{r_2\cdot r_3^2\cdots  r_{k-1}^{k-2}  }
\right)^{-r_k^{k-1}\sum\limits_{[c],\alpha}[X_\alpha^{\langle c\rangle },
C_G(c)]_{\phiphi}^{k-1}
\LLL^{\phi_k F_{\alpha}^c}}\nonumber
\\
&{\ = }&\left(\prod\limits_{r_1, \ldots,r_k\geq 1}\left(1-\LLL^{\Phi_k(\rr)d/2}t^{r_1r_2\cdots r_k}\right)^{r_2r_3^2\cdots r_k^{k-1}}\right)
^{-[X, G]_{\phiphi}^{k}}\,.\nonumber
\end{eqnarray}
In the last two equations $r$ is substituted by $r_k$.
\end{proof}

\begin{remark}
For $\phiphi=\underline{0}$, i.e. if $\varphi_i=0$ for all $i$, the definition of the
higher order generalized Euler characteristics does not demand $X$ to be smooth. This way one gets
the definition of a sort of higher order inertia classes and the statement of Theorem~\ref{main} 
holds for an arbitrary $G$-variety $X$.
\end{remark}

Since $\chi ([X^n, G_n]_{\phiphi}^{k})=\chi^{(k)}(X,G)$, $\chi(\LLL)=1$, taking the Euler characteristic of
the both sides of the equation (\ref{Principal})
one gets Theorem~A of \cite{T}:
$$
\sum_{n\ge 0}\chi^{(k)}(X^n, G_n)\cdot t^n
=\left(\prod\limits_{r_1, \ldots,r_k\geq 1}\left(1-t^{r_1r_2\cdots r_k}\right)^{r_2r_3^2\cdots r_k^{k-1}}\right)
^{-\chi^{(k)}(X, G)}\,.
$$

Let $e^{(k)}_{\phiphi}(X, G; u,v):=e([X, G]_{\phiphi}^{k};u,v)$ be the
{higher order Hodge--Deligne polynomial} of $(X,G)$ (of order $k$). 
Applying the Hodge--Deligne polynomial homomorphism, one gets a generalization of the main result
in \cite{WZ}:
$$
\sum_{n\ge 0} e^{(k)}_{\phiphi}(X^n, G_n; u,v)\, \cdot t^n
=\left(\prod\limits_{r_1, \ldots,r_k\geq 1}\left(1-(uv)^{\Phi_k(\rr)d/2}\cdot t^{r_1r_2\cdots r_k}\right)^{r_2r_3^2\cdots r_k^{k-1}}\right)
^{-e^{(k)}_{\phiphi}(X, G;u,v)}\,.
$$


\begin{thebibliography}{15}

\bibitem{AS} M.~Atiyah, G.~Segal. 
On equivariant Euler characteristics. 
J. Geom. Phys. 6 (1989), no.4, 671--677. 

\bibitem{BatDais} V.~Batyrev, D.~Dais.
Strong McKay correspondence, string-theoretic Hodge numbers and mirror symmetry. 
Topology 35 (1996), no.4, 901--929.

\bibitem{BF} J.~Bryan, J.~Fulman.
Orbifold Euler characteristics and the number of commuting $m$-tuples in the symmetric groups. 
Ann. Comb. 2 (1998), no.1, 1--6. 

\bibitem{Cheah1} J.~Cheah On the cohomology of Hilbert schemes
of points, J. Algebraic Geom. 5 (1996), 479--511.

\bibitem{Vafa} L.~Dixon, J.~Harvey, C.~Vafa, E.~Witten.
Strings on orbifolds. I. Nucl. Phys. {B 261} (1985), 678--686.

\bibitem{FLNU} T.~de Fernex, E.~Lupercio, T.~Nevins, B.~Uribe.
Stringy Chern classes of singular varieties. 
Adv. Math. 208 (2007), no.2, 597--621.

\bibitem{GLM-MRL} S.M.~Gusein-Zade, I.~Luengo, A.~Melle-Hern\'andez.
A power structure over the Grothendieck ring of varieties.
Math. Res. Lett. 11 (2004), 49--57.

\bibitem{GLM-Michigan}
S.M.~Gusein-Zade, I.~Luengo, A.~Melle-Hern\'andez.
Power structure over the {G}rothendieck ring of varieties and
  generating series of {H}ilbert schemes of points, Michigan Math.\ J.
 {54} (2006), no.~2, 353--359.

\bibitem{GLMSteklov} S.M.~Gusein-Zade, I.~Luengo, A.~Melle-Hern\'andez.
On the power structure over the Grothendieck ring of varieties and its applications.
Proc. Steklov Inst. Math. 258 (2007), no.1, 53--64.

\bibitem{HH} F.~Hirzebruch, Th.~H\"ofer. 
On the Euler number of an orbifold.
Math. Ann. 286 (1990), no.1-3, 255--260. 


\bibitem{Kaw} T.~Kawasaki.
The signature theorem for V-manifolds. Topology 17 (1978), no.1, 75--83.


\bibitem{mac} I.G.~Macdonald. The Poincar\'e polynomial of a
symmetric product, Proc. Cambridge Philos. Soc. 58 (1962), 563--568.

\bibitem{T} H.~Tamanoi. Generalized orbifold Euler
characteristic of symmetric products and equivariant Morava $K$-theory.
Algebraic \& Geometric Topology 1 (2001), 115--141.

\bibitem{WZ} W.~Wang, J.~Zhou. Orbifold Hodge numbers of wreath
product orbifolds. J. Geometry and Physics 38 (2001), 152--169.

\bibitem{Zaslow} E.~Zaslow, Topological orbifold models and quantum
cohomology rings. Commun. Math. Phys. {156} (1993), 301--331.
\end{thebibliography}
\end{document}